\newcommand{\details}[1]{}
\newcommand{\cA}{\mathcal{A}}
\newcommand{\cP}{\mathcal{P}}
\newcommand{\cQ}{\mathcal{Q}}
\newcommand{\cX}{\mathcal{X}}
\newcommand{\PICARD}{\mathbb{P}\mathrm{icard}}
\newcommand{\STACK}{\mathbb{S}\mathrm{tack}}
\newcommand{\ass}{\mathsf{a}}
\newcommand{\id}{\mathrm{id}}
\newcommand{\comm}{\mathsf{c}}
\newcommand{\bS}{\textbf{S}}
\newcommand{\bfu}{\textbf{1}}
\newcommand{\twoX}{\mathbb{X}}
\newcommand{\twoT}{\mathbb{T}}
\newcommand{\Set}{\mathrm{Set}}
\newcommand{\Ab}{\mathrm{Ab}}
\newcommand{\Alg}{\mathrm{Alg}}
\newcommand{\T}{\mathrm{T}}
\newcommand{\C}{\mathrm{C}}
\newtheorem{theorem}{Theorem}[section]
\newtheorem*{theorem*}{Theorem}
\newtheorem{corollary}[theorem]{Corollary}
\newtheorem*{corollary*}{Corollary}
\newtheorem*{claim*}{Claim}
\newtheorem*{lemma*}{Lemma}
\newtheorem*{proposition*}{Proposition}
\newtheorem*{conjecture*}{Conjecture}
\newtheorem{def-proposition}[theorem]{Definition-Proposition}
\theoremstyle{definition}
\newtheorem{definition}[theorem]{Definition}
\newtheorem*{definition*}{Definition}
\newtheorem*{example*}{Example}
\numberwithin{equation}{section}
\begin{document}

\title[The group law of Picard stacks via matrices]
{The group law of Picard stacks via matrices}

\author{Cristiana Bertolin}
\address{Dipartimento di Matematica, Universit\`a di Padova, Via Trieste 63, Italy}
\email{cristiana.bertolin@unipd.it}

\author{Federica Galluzzi}
\address{Dipartimento di Matematica, Universit\`a di Torino, Via Carlo Alberto 10, Italy}
\email{federica.galluzzi@unito.it}

\subjclass[2020]{18C10,18N10}

\keywords{Picard Stack, Algebraic 2-stack theory, 2-algebras, Group law}

\begin{abstract}
	Let $\bS$ be a site. We show that the 2-stack of strictly commutative Picard stacks over $\bS$ is algebraic, i.e. it is 2-equivalent to the 2-stack of 2-algebras for an adequate algebraic 2-stack theory over $\bS$.
\end{abstract}

\maketitle
\tableofcontents

\section*{Introduction}

The concept of algebraic theories was originally introduced by F. W. Lawvere in his doctoral thesis \cite{La}. An algebraic theory is a category whose
objects are finite products of a given object with itself. In this framework, groups can be described as functors from an adequate algebraic theory to the category of sets, which preserve finite products (see \cite{AdRoVi}).
The aim of this paper is to generalize this description of groups to strictly commutative Picard stacks over a site $\bS.$

We start recalling
 the general properties of stacks and strictly commutative Picard stacks (see Section \ref{2-St-and-Pic-St}). We define the notions of algebraic 2-stack theory over a site $\bS$ and of 2-algebra for an algebraic 2-stack theory, which  
  generalize to 2-stacks the notions of algebraic theory and of algebra for an algebraic theory respectively (see Definition \ref{def-alg-2-st-2-alg}).
This brings us to the definition of an algebraic 2-stack, that is a 2-stack 2-equivalent to the 2-stack of 2-algebras for an adequate algebraic 2-stack theory (see Definition \ref{alg-2-st}). Finally in Theorem \ref{Teo} we prove that the 2-stack of strictly commutative Picard stacks over a site $\bS$ is algebraic.

\section{Preliminary notions}\label{2-St-and-Pic-St}

Let $\bS$ be a site. In this paper $U$ denotes an object of $\bS$.  A \textit{stack} over $\bS$ is a fibered category $\cX$ over $\bS$ such that 
\begin{itemize}
	\item (\emph{Gluing condition on objects}) descent is effective for objects in $\cX$, and
	\item (\emph{Gluing condition on arrows}) for any $U$ and for every pair of objects $X,Y$ of the category $\cX(U)$, the presheaf of arrows $\mathrm{Arr}_{\cX(U)}(X,Y)$ of $\cX(U)$ is a sheaf over $U$.
\end{itemize}

For the notions of morphism of stacks (i.e. cartesian functor), and morphism of cartesian functors, we refer to 
\cite[Chp II 1.2]{Gi}. An \textit{equivalence of stacks} $F: \cX_1 \to \cX_2$ is a morphism of stacks such that any object $Y $ of $\cX_2(U)$ is isomorphic to an object of the form $F(U)(X)$ with $X$ an object of $\cX_1(U)$, and for any pair of objects $X,Y  $ of $\cX_1(U)$, the map
$\mathrm{Arr}_{\cX_1(U)}(X,Y) \to \mathrm{Arr}_{\cX_2(U)}(F(U)(X),F(U)(Y))$ is bijective.
Two stacks are \textit{equivalent} if there exists an equivalence of stacks between them.
A \textit{stack in groupoids} over $\bS$ is a stack $\cX$ over $\bS$ such that the category $\cX(U)$ is a groupoid, i.e. a category whose arrows are invertible.

A \textit{strictly commutative Picard stack} over the site $\bS$ (just called a Picard stack) is a stack in groupoids $\cP$ over $\bS$ endowed
\begin{itemize}
	\item  with a morphism of stacks  $ +: \cP \times \cP \rightarrow \cP,$ called \emph{the group law} of $\cP$, which assigns to each object 
	$(X_1,X_2)$ of $(\cP \times_{\bS} \cP) (U)$ the object $X_1+X_2$ of $\cP(U)$,  and which assigns to each arrow $(f,g): (X_1,X_2) \to (X'_1,X'_2)$ of $(\cP \times_{\bS} \cP)(U)$ the arrow $f+g: X_1+X_2 \to X'_1+X'_2$ of $\cP (U)$, and 
	\item two 
	natural isomorphisms, called respectively the \emph{associativity} and the \emph{commutativity}
	
	$$\ass :+\circ (+\times \id_\cP) \Rightarrow +\circ (\id_\cP \times +),$$ 
	$$\comm : +\circ \mathsf{s} \Rightarrow +,$$
	with $ \mathsf{s}(X,Y)=(Y,X)$ for all $X,Y \in \cP(U)$, which express respectively the associativity and the commutativity constraints of the group law $+$ of $\cP$,
\end{itemize}
such that $\cP (U)$ is a strictly commutative Picard category (i.e. it is possible to make the sum of two objects of $\cP(U)$, this sum is associative and commutative, and any object of $\cP(U)$ has an inverse with respect to this sum, see \cite[\S 1]{B13} or \cite[\S 1.4]{De}
 for more details). The word "strictly" means that $\comm_{X,X}: X+X \to X+X$ is the identity 
for all $X \in \cP(U)$.

Any Picard stack $\cP$ admits a unique up to unique isomorphism neutral object $\mathbf{e}$, 
which can be defined as a couple $(\mathbf{e},\varphi),$ where $\mathbf{e}$ is a global object of $\cP$ and $\varphi: \mathbf{e}+\mathbf{e} \rightarrow \mathbf{e}$ is an isomorphism of $\cP$.
It exists a unique natural isomorphism
\[	l_X: \mathbf{e} + X \stackrel{\cong}{\longrightarrow} X \qquad \quad  \forall~ X \in \cP(U)\]
such that the following diagram commute
\[\xymatrix{
	\mathbf{e}+(\mathbf{e}+X)  \ar[d]_{\id_\cP+l_X} & (\mathbf{e}+\mathbf{e})+X \ar[l]_\ass \ar[d]^{\varphi +\id_\cP} \\
	\mathbf{e}+X \ar@{=}[r]& \mathbf{e}+X
}
\]
It exists also the natural isomorphism 
\[
r_X: X+\mathbf{e} \stackrel{\cong}{\longrightarrow} X \qquad  \quad \forall~ X \in \cP(U).
\]
The isomorphism $\varphi$ is a special case of these two natural isomorphisms: $\varphi=l_\mathbf{e}=r_\mathbf{e}$, and the natural isomorphism $\comm$ exchanges  $l_X$ and $r_X$. 

  Observe that if $X_i$ and $f_j$ are respectively objects and arrows of $\cP(U)$, $n_i,m_j$ are integers, and $N,M >0$ are natural numbers, the finite sums
$$ \sum_{i=1}^N n_i X_i \qquad \qquad \mathrm{and} \qquad \qquad  \sum_{j=1}^M m_j f_j $$ 
are well defined.

An \textit{additive functor} $(F,\sum):\cP_1 \rightarrow \cP_2 $
between two Picard stacks is a morphism of stacks $F: \cP_1 
\rightarrow \cP_2$ endowed with a natural isomorphism $\sum_{X,Y}: F(X +_{\cP_1} Y) \cong 
F(X) +_{\cP_2} F(Y)$ (for all $X,Y \in \cP_1(U)$) which is compatible with the natural 
isomorphisms $\ass$ and $\comm$ underlying 
$\cP_1$ and $\cP_2$.

 A \textit{morphism of additive functors $u:(F,\sum) \Rightarrow (F',\sum') $} is a morphism of cartesian functors which is compatible with the natural isomorphisms $\sum$ and $\sum'$ of $F$ and $F'$ respectively.
The stack $\cA\mathrm{rr}(\cP,\cQ),$ whose objects are additive functors from $\cP$ to $\cQ$ and whose arrows are morphisms of additive functors, is a stack in groupoid: any morphism of additive functors is invertible (i.e. it is an isomorphism of additive functors).

\medskip

\begin{definition}
	The \emph{product of two stacks $\cP$ and $\cQ$ over $\bS$} is the stack 
	$\cP \times_\bS \cQ$ over $\bS$ defined as followed:
	\begin{itemize}
		\item an object of the category $\cP \times_\bS \cQ(U)$ is a pair $(X,Y)$ of objects with $X$ an object of $\cP(U)$ and $Y$ an object of $\cQ(U)$;
		\item if $(X,Y)$ and $(X',Y')$ are two objects of 
		$\cP \times_\bS \cQ(U)$, an arrow of $\cP \times \cQ(U)$ from $(X,Y)$ to $(X',Y')$ is a pair $(f,g)$ of arrows with $f:X \rightarrow  X' $ an arrow of $\cP(U)$ and $g:Y \rightarrow  Y' $ an arrow of $\cQ(U)$;
	\end{itemize}
\end{definition}

For the notion of product of two Picard stacks see \cite[Definition 2.1]{B11} or \cite[Definition 2.1]{BeTa}. If $\cP$ is a stack, for any natural number $n>0$, $\cP^n $ denotes the product of $\cP$ with itself $n$ times, that is $\cP^n:= \cP \times_\bS \cdots \times_\bS  \cP$. Clearly $ \cP^n \times_\bS  \cP^m = \cP^{n+m}.$
We view an object of $\cP^n$ as a column vector $(X_1, \dots , X_n)^t$ of length $n$ (here $t$ is the transpose) and an arrow of $\cP^n$ as a row vector $(f_1, \dots , f_n)$ of length $n$. 

\medskip

A \textit{2-stack} over the site $\bS$ is a fibered 2-category $\twoX$ over $\bS$ such that
\begin{itemize}
	\item  2-descent is effective for objects in $\twoX$, and
	\item  for any $U$ and for every pair of objects $X,Y$ of the 2-category $\twoX(U)$, the fibered category of arrows $\cA\mathrm{rr}_{\twoX(U)}(X,Y)$ of $\twoX(U)$ is a stack over $\bS_{|U}.$
\end{itemize}

For the notions of morphism of 2-stack  (i.e. cartesian 2-functor), morphism of cartesian 2-functors, and modification of morphisms of cartesian 2-functors, we refer to \cite[Chp I]{Hakim}. A \textit{2-equivalence of 2-stacks} $F: \twoX_1 \to \twoX_2$ is a morphism of 2-stacks such that any object $Y $ of $\twoX_2(U)$ is equivalent to an object of the form $F(U)(X)$ with $X$ an object of $\twoX_1(U)$, and for any pair of objects $X,Y  $ of $\twoX_1(U)$, the morphism of stacks 
$\cA\mathrm{rr}_{\twoX_1(U)}(X,Y) \to \cA\mathrm{rr}_{\twoX_2(U)}(F(U)(X),F(U)(Y))$ is an equivalence of stacks.
Two 2-stacks are \textit{2-equivalent} if there exists a 2-equivalence of 2-stacks between them.
 A \textit{2-stack in 2-groupoids} over $\bS$ is a 2-stack $\twoX$ over $\bS$ such that the 2-category $\twoX(U)$ is a 2-groupoid, i.e. a 2-category whose 1-arrows are invertible up to a 2-arrow and whose 2-arrows are strictly invertible.  

\medskip

 Stacks over $\bS$ form a 2-stack over $\bS$, denoted 
$$ \STACK$$
 whose objects are stacks and whose hom-stack consists of morphisms of stacks and morphisms of cartesian functors. Picard stacks over $\bS$ form a 2-stack in 2-groupoids over $\bS$, denoted 
$$\PICARD$$
 whose objects are Picard stacks and whose hom-stack in groupoid consists of additive functors and morphisms of additive functors.

\medskip

We generalize from categories to 2-stacks \cite[Chapter 1, Definition 1.1]{AdRoVi}.

\begin{definition}\label{def-alg-2-st-2-alg}
	 An \textit{algebraic 2-stack theory} over $\bS$ is a 2-stack $\twoT$ over $\bS$ with finite products. \\
		A \textit{2-algebra} for the algebraic 2-stack theory $\twoT$ is a morphism of 2-stacks 
	$$A: \twoT \longrightarrow \STACK$$
	 preserving finite products.
	\end{definition}

	  Denote by $\Alg \twoT $
	  the 2-stack of 2-algebras for $\twoT$, whose hom-stack consists of morphisms of cartesian 2-functors and modification of morphisms of cartesian 2-functors.

	  \begin{definition}\label{alg-2-st}
		 A 2-stack $\twoX$ over $\bS$ is \textit{algebraic} if it is 2-equivalent to $\Alg \twoT$ for some algebraic 2-stack theory $\twoT$ over $\bS$.
\end{definition}

\section{Algebraic Picard stacks}\label{Alg-Pic-St}

Let $\bfu$ be the Picard stack whose only object is the unit object $ \mathbf{e}$ and whose only 1-arrow is the identity $\id_\mathbf{e}$. According to the dictionary between Picard stacks over $\bS$ and length one complexes of abelian sheaves on $\bS$ proved in~\cite[\S 1.4]{De},
the length one complex associated to the Picard stack $\bfu$ is $[  \mathbf{E} \stackrel{\id_\mathbf{E}}{\rightarrow} \mathbf{E}],$ where $\mathbf{E}$ the final object of the category of abelian sheaves on $\bS$.

\begin{definition}
 Denote by $\twoT_{\bfu}$ the 2-stack in which 
\begin{itemize}
	\item the objects are the Picard stacks $\bfu, \bfu^2, \bfu^3,\dots$. For any natural number $n>0,$ the Picard stack $\bfu^n$ admits only one object, the $n$-uplet $(\mathbf{e}, \dots, \mathbf{e})^t$, and only one arrow, the $n$-uplet $(\id_\mathbf{e}, \dots, \id_\mathbf{e})$.
	\item a 1-arrow from $\bfu^n$ to $\bfu^k$
	$$\underline{P}: \bfu^n \to \bfu^k$$
	  is a morphism of stacks defined by a matrix $P=(p_{ij})_{i=1, \dots,k \atop j=1, \dots ,n}$ of integers with $n$ columns and $k$ rows in the following way:
	  \begin{itemize}
	  	\item the object function: $\underline{P}$ sends the only object of $\bfu^n$, the $n$-uplet $(\mathbf{e}, \dots, \mathbf{e})^t$, to the only object of  $\bfu^k$, the $k$-uplet $(\mathbf{e}, \dots, \mathbf{e})^t$:
	  	$$\underline{P} \left(\begin{matrix} \mathbf{e} \\ \vdots \\ \mathbf{e}\end{matrix} \right) := P \left(\begin{matrix} \mathbf{e} \\ \vdots \\ \mathbf{e}\end{matrix}\right)
	  	=  \left(\begin{matrix} \sum_{j=1}^np_{1j}\mathbf{e} \\ \vdots \\ \sum_{j=1}^np_{kj}\mathbf{e}\end{matrix}\right)
	  	=  \left(\begin{matrix} \mathbf{e} \\ \vdots \\ \mathbf{e}\end{matrix}\right).$$
	  	\item  the arrow function: $\underline{P}$ sends the only arrow of $\bfu^n$, the $n$-uplet of arrows $(\id_\mathbf{e}, \dots, \id_\mathbf{e})$, to the only arrow of $\bfu^k$, the $k$-uplet of arrows $(\id_\mathbf{e}, \dots, \id_\mathbf{e})$. 
	  \end{itemize}
	The composition of two functors $\underline{P}: \bfu^n \to \bfu^k$ and $\underline{Q}: \bfu^k \to \bfu^m$ is the functor $\underline{QP}: \bfu^n \to \bfu^m$, where 
	$QP$ is the product of the two matrices $Q$ and $P$.
	\item a 2-arrow $u: \underline{P} \Rightarrow \underline{Q}$ between two morphisms of stacks $\underline {Q}, \underline{P}: \bfu^n \to \bfu^k$ is a morphism of cartesian functors which assigns to the only object $(\mathbf{e}, \dots, \mathbf{e})^t$ of $\bfu^n$ the only arrow  $(\id_\mathbf{e}, \dots, \id_\mathbf{e})$ of $\bfu^k$:
	\begin{equation}\label{arrow1^k}
\qquad		u_{(\mathbf{e}, \dots, \mathbf{e})^t} = ( \id_\mathbf{e} \dots \id_\mathbf{e}) :  \underline{P} \left(\begin{matrix} \mathbf{e} \\ \vdots \\ \mathbf{e}\end{matrix}\right) \longrightarrow   \underline{Q} \left(\begin{matrix} \mathbf{e} \\ \vdots \\ \mathbf{e}\end{matrix}\right).
	\end{equation}	
\end{itemize}
\end{definition}

 The 2-stack $\twoT_{\bfu}$ over $\bS$ is an algebraic 2-stack theory over $\bS$.

\begin{theorem}\label{Teo}
	The 2-stack $\PICARD$ is 2-equivalent to the 2-stack $\Alg \twoT_{\bfu} .$ 
\end{theorem}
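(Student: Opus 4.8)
I will transpose to the setting of $2$-stacks the classical argument (\cite{La}, \cite{AdRoVi}) that product-preserving functors out of the Lawvere theory of abelian groups are abelian groups. Concretely, I will produce two morphisms of $2$-stacks
$$\Psi: \PICARD \longrightarrow \Alg\twoT_\bfu, \qquad\qquad \Phi: \Alg\twoT_\bfu \longrightarrow \PICARD$$
and show that they are quasi-inverse to one another. To a Picard stack $\cP$, $\Psi$ assigns the $2$-algebra $A_\cP$ with $A_\cP(\bfu^n):=\cP^n$ and, on the $1$-arrow $\underline P: \bfu^n\to\bfu^k$ given by a matrix $P=(p_{ij})$, the morphism of stacks $\cP^n\to\cP^k$ sending the column vector $(X_1,\dots,X_n)^t$ to $\bigl(\sum_{j}p_{1j}X_j,\dots,\sum_{j}p_{kj}X_j\bigr)^t$ and the row vector of arrows $(f_1,\dots,f_n)$ to $\bigl(\sum_{j}p_{1j}f_j,\dots,\sum_{j}p_{kj}f_j\bigr)$; this is legitimate because finite integer sums of objects and of arrows are defined in any Picard stack, the negative coefficients being absorbed by the inverses (Section~\ref{2-St-and-Pic-St}). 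Since $\underline{QP}$ is defined by the product matrix $QP$, the comparison isomorphisms $A_\cP(\underline{QP})\cong A_\cP(\underline Q)\circ A_\cP(\underline P)$, as well as $A_\cP(\underline{I_n})\cong\id_{\cP^n}$, are the canonical isomorphisms of $\cP$ rearranging an iterated integer sum, built from $\ass$, $\comm$ and the neutral-object isomorphisms $l_X$, $r_X$; they are the coherence data making $A_\cP$ a cartesian $2$-functor, and product preservation is immediate from $\cP^n\times_\bS\cP^m=\cP^{n+m}$. On hom-stacks, $\Psi$ carries an additive functor $(F,\sum):\cP_1\to\cP_2$ to the morphism of cartesian $2$-functors whose component at $\bfu^n$ is $F^n:\cP_1^n\to\cP_2^n$ and whose pseudo-naturality $2$-cells are assembled from $\sum$, and a morphism of additive functors to the evident modification.

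For $\Phi$, to a $2$-algebra $A$ I attach the stack $\cP:=A(\bfu)$, with group law $+:=A(\underline{(1\ 1)})$ read off through the equivalence $A(\bfu^2)\simeq\cP^2$ coming from product preservation. The associativity and commutativity constraints are recovered from the coherence isomorphisms of $A$: comparing through them the two factorizations of $\underline{(1\ 1\ 1)}:\bfu^3\to\bfu$ across $\underline{(1\ 1)}$ produces $\ass$, and comparing $\underline{(1\ 1)}$ with its precomposition by the transposition matrix $\bigl(\begin{smallmatrix}0&1\\1&0\end{smallmatrix}\bigr)$ produces $\comm$; one verifies that $\ass$ and $\comm$ satisfy the coherence axioms of a strictly commutative Picard category as a consequence of the axioms of a cartesian $2$-functor for $A$. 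On hom-stacks, $\Phi$ takes the component at $\bfu$ of a morphism of cartesian $2$-functors, respectively of a modification. The composite $\Phi\circ\Psi$ is then the identity of $\PICARD$ up to canonical $2$-isomorphism: $A_\cP(\underline{(1\ 1)})=+$, and the constraints it produces are exactly the canonical rearrangement isomorphisms, i.e. $\ass$ and $\comm$. To get $\Psi\circ\Phi\simeq\id_{\Alg\twoT_\bfu}$ I will exhibit a pseudo-natural equivalence $A\simeq A_{A(\bfu)}$: product preservation identifies $A(\bfu^n)$ with $A(\bfu)^n$, and since every $1$-arrow $\underline P$ is built from $\underline{(1\ 1)}$, $\underline{(-1)}$, the projections and the diagonals by composition and formation of products, the compatibility of $A$ with composition and products forces $A(\underline P)$ to be coherently isomorphic to $A_{A(\bfu)}(\underline P)$.

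The main obstacle is the coherence bookkeeping. For $\Psi$ one has to prove that the rearrangement isomorphism of an iterated integer sum in $\cP$ is independent of how it is built from $\ass$, $\comm$, $l_X$, $r_X$; this is the coherence theorem for strictly commutative Picard categories (\cite[\S 1]{B13}, \cite[\S 1.4]{De}), and the hypothesis ``strictly commutative'' is exactly what controls the repeated summands occurring, for instance, in $\underline{(2\ 0)}$ or when one passes through a diagonal — it is what makes the structure $2$-cells of $A_\cP$ well defined and makes the cartesian $2$-functor axioms hold. For $\Psi\circ\Phi\simeq\id$ the delicate point is to check that the whole coherence datum of an arbitrary $2$-algebra $A$ is already determined by its restrictions to $\underline{(1\ 1)}$ and $\underline{(-1)}$. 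Everything else — product preservation, the sheaf conditions packaged into the word ``$2$-stack'', and the fact that $\Psi$ and $\Phi$ induce equivalences on the relevant hom-stacks of $1$- and $2$-arrows — is routine but lengthy, being the $2$-stack version of the $1$-categorical computations in \cite{AdRoVi}.
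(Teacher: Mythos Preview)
Your proposal is correct and follows essentially the same route as the paper: build the forward map by letting integer matrices act on column vectors of objects of $\cP$, and recover the Picard structure on $A(\bfu)$ from the image of $\underline{(1\ 1)}$ together with the relevant comparison isomorphisms. The packaging differs in two minor ways. First, the paper verifies that $\widehat{(\;\;)}$ is a $2$-equivalence by checking essential surjectivity and that the hom-stack map is an equivalence, whereas you build an explicit quasi-inverse $\Phi$; these are equivalent strategies. Second, the paper extracts $\ass$ and $\comm$ as images under $A$ of the (unique) $2$-arrows of $\twoT_\bfu$ between the two composites $\underline{(1\ 1)}\circ(\id_\bfu,\underline{(1\ 1)})$ and $\underline{(1\ 1)}\circ(\underline{(1\ 1)},\id_\bfu)$, respectively between $\underline{(1\ 1)}\circ\underline{\bigl(\begin{smallmatrix}0&1\\1&0\end{smallmatrix}\bigr)}$ and $\underline{(1\ 1)}$, while you extract them from the pseudo-functoriality constraints of $A$ applied to the equalities of matrices $(1\ 1\ 1)=(1\ 1)\cdot\bigl(\begin{smallmatrix}1&0&0\\0&1&1\end{smallmatrix}\bigr)=(1\ 1)\cdot\bigl(\begin{smallmatrix}1&1&0\\0&0&1\end{smallmatrix}\bigr)$ and $(1\ 1)=(1\ 1)\cdot\bigl(\begin{smallmatrix}0&1\\1&0\end{smallmatrix}\bigr)$; since those $2$-arrows in $\twoT_\bfu$ are identities, the two recipes produce the same isomorphisms. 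Your explicit invocation of the coherence theorem for strictly commutative Picard categories, and your remark that the full coherence datum of $A$ is determined by its values on $\underline{(1\ 1)}$ and $\underline{(-1)}$, make precise points that the paper leaves implicit.
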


\begin{proof}
	We construct a 2-equivalence of 2-stacks
	$$ \widehat{(\; \;)}:\PICARD \longrightarrow \Alg \twoT_{\bfu} $$
from the 2-stack of Picard stacks to the 2-stack of 2-algebras for the algebraic 2-stack theory $\twoT_{\bfu}.$ We proceed in several steps.

(1) To any Picard stack $\cP$ we associate a 2-algebra for $\twoT_{\bfu}$, that is a morphism of 2-stacks 
$$\widehat{\cP}: \twoT_\bfu \longrightarrow \STACK,$$
 preserving finite products, in the following way:
 \begin{itemize}
 	\item the object function: for any object $\bfu^n$ of $\twoT_\bfu$, we set $\widehat{\cP}(\bfu^n):=\cP^n. $ Since 
 	$$\widehat{\cP}(\bfu \times_\bS \bfu) =\widehat{\cP}(\bfu^2)=\cP^2 = \cP \times_\bS \cP = \widehat{\cP}(\bfu) \times_\bS \widehat{\cP}(\bfu),$$
 	 the functor  $\widehat{\cP}$ preserves finite products.
 	 
 	\item the 1-arrow function: for any 1-arrow
 	$\underline{P}: \bfu^n \to \bfu^k$ of $\twoT_\bfu$, the morphism of stacks
 	$$\widehat{\cP}(\underline{P}): \cP^n \longrightarrow  \cP^k $$
 	sends 
 	\begin{itemize}
 		\item the $n$-uplet $(X_1, \dots, X_n)^t$ of $\cP^n(U)$ to the $k$-uplet 
 		$$\widehat{\cP}(\underline{P}) \left(\begin{matrix} X_1 \\ \vdots \\ X_n\end{matrix}\right) :=P \left(\begin{matrix} X_1 \\ \vdots \\ X_n\end{matrix}\right)
 		=  \left(\begin{matrix} \sum_{j=1}^np_{1j} X_j \\ \vdots \\ \sum_{j=1}^np_{kj} X_j\end{matrix}\right)$$
 		of  $\cP^k(U).$
 		\item the arrow $(f_1, \dots, f_n): (X_1, \dots, X_n)^t \to (Y_1, \dots, Y_n)^t$ of $\cP^n(U)$ to the arrow 
 		$$\widehat{\cP}(\underline{P}) (f_1, \dots, f_n):= \widehat{\cP}(\underline{P})  \left(\begin{matrix} X_1 \\ \vdots \\ X_n\end{matrix}\right) \longrightarrow P  \left((f_1, \dots, f_n) \left(\begin{matrix} X_1 \\ \vdots \\ X_n\end{matrix}\right) \right) = \widehat{\cP}(\underline{P}) \left(\begin{matrix} Y_1 \\ \vdots \\ Y_n\end{matrix}\right) $$
 		of  $\cP^k(U).$ Clearly if $(\id_\mathbf{e}, \dots, \id_\mathbf{e})$ is the identity arrow of $\cP^n(U)$, then  $\widehat{\cP}(\underline{P}) (\id_\mathbf{e}, \dots, \id_\mathbf{e})$ is the identity arrow of $\cP^k(U)$, and if
 		 $(g_1, \dots, g_n): (Y_1, \dots, Y_n)^t \to (Z_1, \dots, Z_n)^t$ is another arrow of $\cP^n(U),$ we have that 
 		 \[\widehat{\cP}(\underline{P}) (g_1, \dots, g_n) \circ \widehat{\cP}(\underline{P}) (f_1, \dots, f_n) = \widehat{\cP}(\underline{P}) \big ( (g_1, \dots, g_n) \circ (f_1, \dots, f_n) \big). \]
 	\end{itemize}
 	\item the 2-arrow function: for any 2-arrow $u: \underline{P} \Rightarrow \underline{Q}$ of $\twoT_\bfu$ between two morphisms of stacks $\underline {Q}, \underline{P}: \bfu^n \to \bfu^k,$ the morphism of cartesian functors 
 		$$\widehat{\cP}(u):\widehat{\cP}(\underline{P}) \Longrightarrow \widehat{\cP}(\underline{Q} )$$
 		assigns to an $n$-uplet $(X_1, \dots, X_n)^t$ of $\cP^n(U)$ the arrow of $\cP^k(U)$ which is the image of the arrow $u_{(\mathbf{e}, \dots, \mathbf{e})^t}$ \eqref{arrow1^k} of $\bfu^k$ via the identification $\widehat{\cP}(\bfu^k)= \cP^k$:
 		\[\widehat{\cP}(u)_{(X_1, \dots, X_n)} := \widehat{\cP}(u_{(\mathbf{e}, \dots, \mathbf{e})^t})_{(X_1, \dots, X_n)}
 		 : \widehat{\cP}(\underline{P}) \left(\begin{matrix} X_1 \\ \vdots \\ X_n\end{matrix}\right) \longrightarrow \widehat{\cP}(\underline{Q}) \left(\begin{matrix} X_1 \\ \vdots \\ X_n\end{matrix}\right).  \]
 		For any arrow $(f_1, \dots, f_n): (X_1, \dots, X_n)^t \to (Y_1, \dots, Y_n)^t$ of $\cP^n(U)$ we have the following commutative diagram in $\cP^k(U)$
 	\begin{equation*}
 	\begin{tabular}{c}
 		\xymatrix{  \widehat{\cP}(\underline{P})(X_1, \dots, X_n)^t \ar[r]^{\widehat{\cP}(u)_{(X_1, \dots, X_n)}} \ar[d]_{\widehat{\cP}(\underline{P}) (f_1, \dots, f_n)} \qquad &  \qquad \widehat{\cP}(\underline{Q})(X_1, \dots, X_n)^t \ar[d]^{\widehat{\cP}(\underline{Q}) (f_1, \dots, f_n)} \\ 
 			\widehat{\cP}(\underline{P})(Y_1, \dots, Y_n)^t \ar[r]_{\widehat{\cP}(u)_{(Y_1, \dots, Y_n)}} \qquad & \qquad   \widehat{\cP}(\underline{Q})(Y_1, \dots, Y_n)^t.}
 	\end{tabular}
 \end{equation*}
 \end{itemize}

	(2) Any additive functor $F: \cP_1 \to \cP_2$ defines a morphism of cartesian 2-functors 
	$\widehat{F}: \widehat{\cP_1} \Rightarrow  \widehat{\cP_2}$. In fact,
	for any object $\bfu^n$  of $\twoT_\bfu$, we set 
	\begin{equation}\label{eq:F^1}
 \widehat{F}_{\bfu^n}:=F^n: \widehat{\cP_1}(\bfu^n)= {\cP_1}^n  \stackrel{F^n}{\longrightarrow} {\cP_2}^n =\widehat{\cP_2}(\bfu^n),
	\end{equation}
	and for any arrow $\underline{P}:\bfu^n \to \bfu^k$ of $\twoT_\bfu$ we have the following commutative diagram in $\STACK$ which involves stacks and  morphisms of stacks
\begin{equation*}
	\begin{tabular}{c}
		\xymatrix{ {\cP_1}^n \ar[r]^{\widehat{F}_{\bfu^n}} \ar[d]_{\widehat{\cP_1}(\underline{P})} & {\cP_2}^n \ar[d]^{\widehat{\cP_2}(\underline{P})}\\  {\cP_1}^k \ar[r]_{\widehat{F}_{\bfu^k}} & {\cP_2}^k.}
	\end{tabular}
\end{equation*}

(3)	 Any morphism $u: F \Rightarrow G$ between two additive functors $F,G: \cP_1 \to \cP_2$ defines a modification of morphisms of cartesian 2-functors 
	$ \widehat{u}: \widehat{F} \Rrightarrow \widehat{G}$.
	Indeed, for any object $\bfu^n$ of $\twoT_\bfu$, we set 
	 $$\widehat{u}_{\bfu^n}:=u^n= (\widehat{F}_{\bfu^n}: {\cP_1}^n  \stackrel{F^n}{\rightarrow} {\cP_2}^n) \Longrightarrow  (\widehat{G}_{\bfu^n}: {\cP_1}^n  \stackrel{G^n}{\rightarrow} {\cP_2}^n) .$$

(4) Now we check that the morphism of 2-stacks
$ \widehat{(\; \;)}:\PICARD \longrightarrow \Alg \twoT_{\bfu} $ is a 2-equivalence of 2-stacks. We start showing that 
if $A: \twoT_{\bfu} \to \STACK$ is a 2-algebra for the algebraic $2$-stack theory $\twoT_{\bfu},$ then $A$ is isomorphic (as morphism of 2-stacks) to $\widehat{\cP}$ for some Picard stack $\cP$.
Denote by $\cP$ the stack $A(\bfu)$. Consider the matrix $(1,1)$, which defines a morphism of stacks $\underline{(1,1)}: \bfu^2 \to \bfu$ in $\twoT_{\bfu}.$ The image 
of this 1-arrow via the morphism of 2-stacks $A$ 
\begin{align*}
	 A(\underline{(1,1)}) : \cP^2 &\longrightarrow \cP\\
	 \left( \begin{matrix}
	 	X_1\\ X_2
	 \end{matrix} \right)& \longmapsto (1,1)  \left( \begin{matrix}
 X_1\\ X_2
\end{matrix} \right) =X_1+X_2
\end{align*}
is a morphism of stacks, that defines a group law $+: \cP^2 \to \cP$ on the stack $\cP$.
The image of the 2-arrow of $\twoT_{\bfu}$
\[ \underline{(1,1)} \circ (\id_{\bfu},\underline{(1,1)}) \Rightarrow \underline{(1,1)} \circ (\underline{(1,1)},\id_{\bfu}) \]
 via the 2-algebra $A$ is the natural isomorphism 
\[ +\circ (\id_\cP \times +) \Rightarrow  +\circ (+\times \id_\cP)  \]
 which expresses the associativity constraint of the group law $+$ of $\cP$.
In an analogous way, the image of the 2-arrow of $\twoT_{\bfu}$
\[ \underline{(1,1)} \circ \underline{\left( \begin{matrix}
0&1\\1&0
\end{matrix} \right)} \Rightarrow \underline{(1,1)} \]
 via $A$ is the natural isomorphism
\[ +\circ \mathsf{s} \Rightarrow +,  \]
where $ \mathsf{s}(X,Y)=(Y,X)$ for all $X,Y \in \cP(U)$, which expresses the commutativity constraint of the group law $+$ of $\cP$.
Hence the stack $\cP$ is a Picard stack. 

Observe that for any $n>0$, 
$$A(\bfu^n)= A(\bfu \times_\bS \dots \times_\bS \bfu )= A(\bfu) \times_\bS \dots \times_\bS A(\bfu )
= \cP \times_\bS \dots \times_\bS \cP = \cP^n = \widehat{\cP}(\bfu^n),$$
that is the two 2-algebras $A$ and $ \widehat{\cP}$ take the same values on objects.
 Moreover for any 1-arrow
$\underline{P}: \bfu^n \to \bfu^k$ of $\twoT_\bfu$, the two morphisms of stacks
$$\widehat{\cP}(\underline{P}), A(\underline{P}): \cP^n \longrightarrow  \cP^k $$ coincide on objects $(X_1, \dots, X_n)^t$ and on arrows  $(f_1, \dots, f_n): (X_1, \dots, X_n)^t \to (Y_1, \dots, Y_n)^t$ of $\cP^n(U)$:
\[ A(\underline{P}) (X_1, \dots, X_n)^t =P (X_1, \dots, X_n)^t = \widehat{\cP}(\underline{P})  (X_1, \dots, X_n)^t,  \]
\[ A(\underline{P}) (f_1, \dots, f_n) =\widehat{\cP}(\underline{P})  (f_1, \dots, f_n) : P (X_1, \dots, X_n)^t \to   P ( (f_1, \dots, f_n) (X_1, \dots, X_n)^t) .  \]
 Finally for any 2-arrow $u: \underline{P} \Rightarrow \underline{Q}$ of $\twoT_\bfu$ between two morphisms of stacks $\underline {Q}, \underline{P}: \bfu^n \to \bfu^k,$ the two morphisms of cartesian functors 
$$\widehat{\cP}(u),A(u): \widehat{\cP}(\underline{P}) \Longrightarrow \widehat{\cP}(\underline{Q} )$$ also coincide. Hence
 the 2-algebras $A$ and $\widehat{\cP} $ are isomorphic as morphism of 2-stacks.

If $\cP_1$ and $\cP_2$ are two Picard stacks,  the morphism of stacks 
\begin{align*}
\cA\mathrm{rr}_{\PICARD}(\cP_1,\cP_2) &\longrightarrow  \cA\mathrm{rr}_{\Alg \twoT_{\bfu}}(\widehat{\cP_1},\widehat{\cP_2}), \\
F& \longmapsto \widehat{F}
\end{align*}
 is an equivalence of stacks,
since by \eqref{eq:F^1}  $F =  \widehat{F}_{\bfu}.$
We can conclude that $ \widehat{(\; \;)}:\PICARD \rightarrow \Alg \twoT_{\bfu} $ is a 2-equivalence of 2-stacks.
\end{proof}

\begin{corollary}
	The 2-stack $\PICARD$ of Picard stacks over $\bS$ is algebraic.
\end{corollary}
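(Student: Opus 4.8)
The plan is to exhibit a 2-equivalence $\widehat{(\;\;)}\colon \PICARD \to \Alg\twoT_\bfu$ between the 2-stack of Picard stacks and the 2-stack of 2-algebras for the algebraic 2-stack theory $\twoT_\bfu$, and then invoke Definition~\ref{alg-2-st}. The corollary is then immediate, since $\twoT_\bfu$ has been observed to be an algebraic 2-stack theory over $\bS$.

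First I would define the functor on objects: to a Picard stack $\cP$ I associate the morphism of 2-stacks $\widehat{\cP}\colon \twoT_\bfu \to \STACK$ sending $\bfu^n$ to $\cP^n$, sending an integer matrix 1-arrow $\underline{P}\colon \bfu^n \to \bfu^k$ to the morphism of stacks $\cP^n \to \cP^k$ given by matrix multiplication $\underline{(X_1,\dots,X_n)^t} \mapsto P(X_1,\dots,X_n)^t = (\sum_j p_{1j}X_j,\dots,\sum_j p_{kj}X_j)^t$ (which makes sense because finite sums $\sum n_i X_i$ are well defined in a Picard category), and sending a 2-arrow $u\colon\underline{P}\Rightarrow\underline{Q}$ to the induced morphism of cartesian functors. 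The key checks here are that $\widehat{\cP}$ preserves finite products (immediate from $\cP^{n+m}=\cP^n\times_\bS\cP^m$) and that matrix multiplication is functorial (composition of 1-arrows corresponds to the matrix product $QP$). Next I would define $\widehat{(\;\;)}$ on 1-arrows and 2-arrows of $\PICARD$: an additive functor $F\colon\cP_1\to\cP_2$ gives a morphism of cartesian 2-functors $\widehat{F}$ with components $\widehat{F}_{\bfu^n}:=F^n\colon\cP_1^n\to\cP_2^n$, compatible with the matrix 1-arrows because $F$ commutes with finite sums up to the coherent isomorphism $\sum$; and a morphism of additive functors $u\colon F\Rightarrow G$ gives a modification $\widehat u$ with components $u^n$.

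The heart of the argument is showing $\widehat{(\;\;)}$ is a 2-equivalence, which by the definition of 2-equivalence of 2-stacks requires (a) essential surjectivity on objects, and (b) that the induced morphism of hom-stacks $\cA\mathrm{rr}_{\PICARD}(\cP_1,\cP_2)\to\cA\mathrm{rr}_{\Alg\twoT_\bfu}(\widehat{\cP_1},\widehat{\cP_2})$ is an equivalence of stacks. For (a), given a 2-algebra $A\colon\twoT_\bfu\to\STACK$, I set $\cP:=A(\bfu)$ and reconstruct its Picard structure: the 1-arrow $\underline{(1,1)}\colon\bfu^2\to\bfu$ is sent by $A$ to a morphism of stacks $\cP^2\to\cP$ which is the group law $+$, the pentagon-type 2-arrow $\underline{(1,1)}\circ(\id_\bfu,\underline{(1,1)})\Rightarrow\underline{(1,1)}\circ(\underline{(1,1)},\id_\bfu)$ is sent to the associativity isomorphism $\ass$, and the 2-arrow built from the swap matrix $\left(\begin{smallmatrix}0&1\\1&0\end{smallmatrix}\right)$ is sent to the commutativity isomorphism $\comm$; since $A$ preserves finite products, $A(\bfu^n)=\cP^n=\widehat{\cP}(\bfu^n)$, and one checks $A$ and $\widehat{\cP}$ agree on all matrix 1-arrows and 2-arrows, so $A\cong\widehat{\cP}$. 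For (b), the crucial point is the identity $\widehat{F}_{\bfu}=F$ recorded in~\eqref{eq:F^1}: this shows the map on hom-stacks is essentially surjective (every morphism $\widehat{\cP_1}\Rightarrow\widehat{\cP_2}$ of 2-algebras has first component an additive functor $\cP_1\to\cP_2$, since its compatibility with the matrix 1-arrow $\underline{(1,1)}$ forces it to respect the group law) and fully faithful on 2-arrows (a modification is determined by its $\bfu$-component).

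I expect the main obstacle to be the coherence bookkeeping in the reconstruction step (a): one must verify that the associativity and commutativity isomorphisms produced from $A$ genuinely satisfy the strict-commutativity axiom ($\comm_{X,X}=\id$) and the pentagon/hexagon coherences, which amounts to checking that $A$, being a morphism of 2-stacks, transports the coherence 2-arrows and modifications among the matrix morphisms in $\twoT_\bfu$ to the required identities in $\STACK$ — in other words, that the equational theory encoded by integer matrices and their relations is exactly the theory of strictly commutative Picard stacks. Everything else is a routine, if lengthy, unwinding of the definitions of morphism, 2-morphism, and modification of cartesian 2-functors.

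\begin{proof}
The 2-stack $\twoT_\bfu$ is an algebraic 2-stack theory over $\bS$, and by Theorem~\ref{Teo} the 2-stack $\PICARD$ is 2-equivalent to $\Alg\twoT_\bfu$. By Definition~\ref{alg-2-st}, $\PICARD$ is therefore algebraic.
\end{proof}
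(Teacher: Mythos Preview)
Your formal proof is correct and is exactly the paper's approach: the corollary is immediate from Theorem~\ref{Teo} together with Definition~\ref{alg-2-st}, and the paper does not supply any further argument. The lengthy preamble you wrote is a faithful sketch of the proof of Theorem~\ref{Teo} itself rather than of the corollary, so it is superfluous here---once the theorem is in hand (as it is), the corollary requires only the two-line deduction you give in the \texttt{proof} environment.
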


 Via the dictionary between Picard stacks over $\bS$ and length one complexes of abelian sheaves on $\bS$ given in \cite[\S 1.4]{De},
 1-motives can be seen as Picard stacks. This implies that 
 Picard stacks have an important role in the theory of motives (see \cite{B09,B09bis,B12,B19,BeTa2,BPSS,BP,BM09,BB,BG,BG2}).
We hope that this paper will shed some light on the group law of Picard stacks. 

\section{Acknowledgements}

We would like to thank the referees for careful reading.

\section{Consent for publication}

Not applicable.

 \section{Availability of data and materials}
 
 Not applicable.
 
 \section{Competing interests}
 
 Not applicable.
 
 \section{Funding}
 
 Federica Galluzzi was supported by the Italian Ministry of University and Research through the PRIN project n. 2022L34E7W.
 Cristiana Bertolin was supported by the Italian Ministry of University and Research through the PRIN project n. 20222B24AY.
 
 \section{Authors' contributions}
 All authors wrote the manuscript. All authors reviewed the manuscript. All authors approved the final version.
 
\bibliographystyle{plain}

\end{document}